\newtheorem {thm}{Theorem}
\newtheorem {prop}[thm]{Proposition}
\newtheorem*{conj*}{Conjecture}
\theoremstyle{definition}
\theoremstyle{remark}
\newtheorem*{rem*}{Remark}
\DeclareMathOperator{\dist}{dist}
\DeclareMathOperator{\inter}{int}
\DeclareMathOperator{\conv}{conv}
\begin{document}

\title[A proof of a conjecture by Haviv, Lyubashevsky and Regev]{A proof of a conjecture by Haviv, Lyubashevsky and Regev on the second moment of a lattice Voronoi cell}
\author{Alexander Magazinov}
\address{Tel Aviv University, School of Mathematical Sciences, Ramat Aviv, Tel Aviv 69978, Israel}
\email{magazinov@post.tau.ac.il}
\thanks{Supported in part by ERC Starting Grant 678520.}

\begin{abstract}
In this short note we prove a sharp lower bound for the second moment of a lattice Voronoi cell in terms of the respective covering radius. This gives an affirmative answer to a conjecture by Haviv, Lyubashevsky and Regev. We also characterize those lattice Voronoi cells for which this lower bound is attained.
\end{abstract}

\maketitle

\section{Introduction}

Consider the $n$-dimensiomal space $\mathbb R^n$. Denote by $\| x \|$ the standard Euclidean norm of a vector $x \in \mathbb R^n$,
and by $|X|$ --- the $n$-dimensional Lebesgue measure of a set $X \subset \mathbb R^n$. The notation $\dist(\cdot, \cdot)$ will refer
to the Euclidean distance between two sets or between a point and a set.

If $\Lambda \subset \mathbb R^n$ is an $n$-dimensional lattice then the quantity
\begin{equation*}
  R = R(\Lambda) = \sup\limits_{x \in \mathbb R^n} \dist(x, \Lambda)
\end{equation*}
is called the covering radius of $\Lambda$. If $v \in \Lambda$, define the {\it Voronoi cell} of $v$ with
respect to $\Lambda$ as follows:
\begin{equation*}
  V_{\Lambda}(v) = \{ x \in \mathbb R^n : \| x - v \| = \dist(x, \Lambda) \}.
\end{equation*}
In other words, the Voronoi cell $V_{\Lambda}(v)$ consists of all points $x \in \mathbb R^n$ that are at least as close
to $v$ as to any other point of $\Lambda$. A Voronoi cell is known to be a convex polytope.

It is clear that the covering radius $R(\Lambda)$ is connected to the notion of a Voronoi cell through the relation
\begin{equation*}
  R(\Lambda) = \sup\limits_{x \in V_{\Lambda}(v)} \| x - v \| \quad \text{(for any $v \in \Lambda$).}
\end{equation*}

The main result of this paper is Theorem~\ref{thm:main} below, providing an affirmative answer to the conjecture by
Haviv, Lyubashevsky and Regev~\cite[Conjecture 1.3]{Reg} (the {\it HLR Conjecture}, for brevity).

\begin{thm}\label{thm:main}
Let $\Lambda \subset \mathbb R^n$ be an $n$-dimensional lattice containing the origin $\mathbf 0$ and let $P = V_{\Lambda}(\mathbf 0)$.
If $R$ is the covering radius of $\Lambda$ then the following inequality holds:
\begin{equation}\label{eq:main}
\int\limits_{P} \| x \|^2 dx \geq \frac{R^2}{3} \cdot |P|.
\end{equation}
\end{thm}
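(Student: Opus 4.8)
The plan is to recast \eqref{eq:main} as a statement about the mean squared distance to the lattice. Since $P$ is a fundamental domain for $\Lambda$ and $\|x\| = \dist(x,\Lambda)$ for $x \in P$, dividing by $|P|$ turns the claim into the assertion that the average of $\dist(\cdot,\Lambda)^2$ over the torus $\mathbb R^n/\Lambda$ is at least $R^2/3$. Here $R$ is exactly the diameter of this torus, attained at a deep hole $w$, i.e.\ a point with $\|w\| = \dist(w,\Lambda) = R$; such a $w$ is a vertex of $P$, and along the segment $[\mathbf 0, w] \subset P$ one has $\dist(sw,\Lambda) = sR$, so the mean of the squared distance over this \emph{single} segment is exactly $\int_0^1 s^2 R^2\,ds = R^2/3$. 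This pins down the constant $1/3$ and suggests that the extremal cells are boxes (orthogonal sums of one-dimensional lattices), for which this one-dimensional computation tensorizes to give equality.

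To produce a lower bound I would first extract what the deep hole alone gives. Writing $d(x) = \dist(x,\Lambda)$, which is $1$-Lipschitz and $\Lambda$-periodic, the triangle inequality on the torus yields the pointwise estimate $d(x) + d(x-w) \ge R$ for every $x$, since the torus distance from $\mathbf 0$ to $w$ equals $R$. Putting $a = d(x)$, $b = d(x-w)$ and combining $a^2 + b^2 \ge \tfrac12(a+b)^2 \ge \tfrac12 R^2$ with the fact that $x$ and $x-w$ are equidistributed, one immediately gets that the average of $d^2$ is at least $R^2/4$. In parallel, convexity of $P$ supplies the complementary ``radial'' information: because $\mathbf 0 \in P$, the ratio $|B(\mathbf 0,r)\cap P|/r^n$ is non-increasing in $r$, which controls the distance distribution near lattice points (the $r^{n-1}$ volume growth that is responsible for the gap between the constants $1/4$ and $1/3$).

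The main obstacle is precisely closing this gap from $R^2/4$ to the sharp $R^2/3$. The soft inputs above are genuinely insufficient: the triangle-inequality argument saturates at $1/4$, and the relaxation that retains only the radial profile $f(r) = |B(\mathbf 0,r)\cap P|/|P|$ together with convexity and the deep-hole constraint $f(r) + f(R-r) \le 1$ already admits profiles whose mean square is strictly below $R^2/3$. Moreover the inequality is simply false for general centrally symmetric convex bodies of circumradius $R$ — a long thin ellipsoid violates it — so the full force of the tiling must enter. My plan for the hard step is therefore to exploit the parallelohedron structure of a Voronoi cell (central symmetry of $P$ \emph{and} of each facet, each facet of a Voronoi-relevant vector $v$ lying in the bisector $\{\langle x,v\rangle = \tfrac12\|v\|^2\}$, and the Minkowski–Venkov constraints on codimension-two faces) to exclude the ``pointy'' cells that defeat the radial relaxation, forcing the cell to be transversally fat in the manner of a box. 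Following equality through these structural constraints should in turn force the Voronoi-relevant vectors to be pairwise orthogonal, giving the promised characterization of the extremal cells as boxes.
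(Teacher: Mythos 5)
There is a genuine gap, and you have essentially identified it yourself: everything you actually prove stops at the constant $\tfrac14$, and the step that is supposed to upgrade this to the sharp $\tfrac13$ is only a speculative plan (invoking central symmetry of facets and Minkowski--Venkov conditions on parallelohedra) with no argument attached. The radial-profile relaxation you describe is, as you note, too weak, and nothing in the proposal explains how the parallelohedron structure would be converted into the missing $\tfrac{1}{12}R^2$; in fact the route through codimension-two face conditions would be considerably harder than what is actually needed.

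The missing idea is a self-similarity bootstrap. The paper takes the deep hole $t$ (your $w$) and intersects $P$ with the shifted tiling $\{P + t + v : v \in \Lambda\}$. Each piece $Q(t,v) = P \cap (P + t + v)$ is centrally symmetric about $\tfrac{t+v}{2}$, so by the parallel-axis identity its second moment about the origin splits as $\bigl\|\tfrac{t+v}{2}\bigr\|^2 |Q(t,v)|$ plus the second moment about its own center. The first terms sum to at least $\tfrac{R^2}{4}|P|$ because $\dist(t,\Lambda) = \|t\| = R$ gives $\bigl\|\tfrac{t+v}{2}\bigr\| \geq \tfrac{R}{2}$ --- this is the same information as your triangle-inequality step. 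The decisive extra gain is that the second terms are bounded below by $\int_P \dist\bigl(x, \tfrac12\Lambda + \tfrac{t}{2}\bigr)^2\,dx$, which by a fundamental-domain computation for the half-scaled lattice equals exactly $\tfrac14 \int_P \|x\|^2\,dx$. Writing $I = \int_P \|x\|^2\,dx$, one obtains $I \geq \tfrac{R^2}{4}|P| + \tfrac14 I$, hence $I \geq \tfrac{R^2}{3}|P|$. In other words, the gap between $\tfrac14$ and $\tfrac13$ is closed not by structural rigidity of Voronoi cells but by observing that the leftover term reproduces a quarter of the original integral; this is also what makes the equality analysis (forcing each $Q(t,v)$ to be a half-scaled translate of $P$, and ultimately $P$ a box) tractable. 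Your proposal as written does not reach the claimed bound.
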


The initial motivation for the HLR Conjecture provided in~\cite{Reg} comes from considering the Covering Radius
Problem ({\sf CRP}) in computational complexity. The {\sf CRP} with approximation factor $\gamma \geq 1$ is the problem of distinguishing between {\sf YES} instances, which are lattices with covering radius at most $r$, and {\sf NO} instances, which are lattices with covering radius bigger than $\gamma r$. Speaking informally, one aims to design a simple
protocol by which a prover can convince a (randomized) verifier that an
instance of {\sf CRP} is a {\sf YES} instance. If this is possible, one concludes that the {\sf CRP} with approximation factor $\gamma$ belongs to the so-called {\sf AM} class of complexity, which is, apparently, not much
wider than {\sf NP}. \cite{Reg} shows that {\sf CRP} with factor $\gamma$ is in {\sf AM} for any $\gamma > \sqrt{3}$ provided that the HLR Conjecture is true. For details, see~\cite{Reg} and the references therein.

Another motivation was
explained to the author by Barak Weiss, from whom the author learned about the HLR Conjecture. It is connected with the famous Minkowski conjecture,
which, in one of its equivalent formulations, reads as follows.

\begin{conj*}[Minkowski; see, for instance,~\cite{Wds}]
Let $(e_1, e_2, \ldots, e_n)$ be an orthonormal basis in $\mathbb R^n$ with respect to the Euclidean scalar product $\langle \cdot, \cdot \rangle$.
Let $\Lambda \subset \mathbb R^n$ be an $n$-dimensional lattice, $\mathbf 0 \in \Lambda$. Assume that $\Lambda$ has unit covolume, i.e, $|V_{\Lambda}(\mathbf 0)| = 1$.
Then for every vector $t \in \mathbb R^n$ there exists a point $v \in \Lambda + t$ such that
\begin{equation*}
  \left| \prod\limits_{i = 1}^n \langle v, e_i \rangle \right| \leq \left( \frac{1}{2} \right)^n.
\end{equation*}
\end{conj*}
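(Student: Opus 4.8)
The plan is to recast the conjecture as a statement about the \emph{inhomogeneous minimum} of the product form and then to attack it through the reduction theory of the diagonal group action, the framework in which every known case has been settled. Writing $N(x) = \prod_{i=1}^n |\langle x, e_i\rangle|$ and, for a unimodular lattice $\Lambda$,
\begin{equation*}
  \mu(\Lambda) \;=\; \sup_{t \in \mathbb R^n} \ \inf_{v \in \Lambda + t} N(v),
\end{equation*}
the conjecture asserts precisely that $\mu(\Lambda) \le 2^{-n}$ for every unimodular $\Lambda$, the extremal value $2^{-n}$ being realized by $\mathbb Z^n$ (at the deep hole $t = (\tfrac12, \ldots, \tfrac12)$). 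The first thing I would record is the symmetry group of $\mu$: it is invariant under the coordinate permutations and reflections, and, crucially, under the full positive diagonal group $A = \{\mathrm{diag}(\lambda_1, \ldots, \lambda_n) : \lambda_i > 0, \ \prod_i \lambda_i = 1\}$, since such maps rescale each coordinate while preserving both $N$ and the covolume. Hence $\mu$ descends to a function on the quotient of the space of unimodular lattices $X_n = \mathrm{SL}_n(\mathbb R)/\mathrm{SL}_n(\mathbb Z)$ by $A$, and the entire problem is governed by the $A$-action on $X_n$.

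Second, I would argue by contradiction: suppose $c^\ast = \sup_\Lambda \mu(\Lambda) > 2^{-n}$. Since $\mu$ is upper semicontinuous and $A$-invariant, I would extract a critical (near-extremal) lattice realizing a value above $2^{-n}$. The technical heart of this step, following McMullen, is to show that such a critical lattice cannot have its $A$-orbit escape into the cusp: a lattice deep in the cusp has coordinate projections so distorted that $N$ becomes small on an entire translate, so extremality confines the orbit to a bounded region of $X_n$. The goal is thus to produce a lattice $\Lambda^\ast$ with $\mu(\Lambda^\ast) > 2^{-n}$ whose $A$-orbit is \emph{bounded}.

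Third, I would invoke the rigidity principle that a bounded $A$-orbit in $X_n$ contains a \emph{compact} $A$-orbit, so that by $A$-invariance and upper semicontinuity of $\mu$ there is a lattice $\Lambda'$ with compact orbit and $\mu(\Lambda') \ge \mu(\Lambda^\ast) > 2^{-n}$. Compact $A$-orbits are classified: they arise from the Minkowski embedding of (an ideal in) the ring of integers $\mathcal O_K$ of a totally real field $K$ of degree $n$, on which $N(v)$ is, up to the normalizing discriminant factor, the absolute field norm, an integer of modulus $\ge 1$ on nonzero algebraic vectors. On this much smaller family the inhomogeneous minimum is controlled by the Euclidean minimum of $K$, for which the bound $\mu \le 2^{-n}$ reduces to a finite, number-theoretic verification, and the equality case forces the split (degenerate) field, that is $\mathbb Z^n$---whose orbit is unbounded and so cannot be the compact orbit just produced. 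This contradiction would establish $c^\ast = 2^{-n}$, and tracking equality throughout identifies the extremizers as the images of $\mathbb Z^n$ under $A$ and the coordinate symmetries.

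The hard part is unmistakably the rigidity input in the third step: the passage from a \emph{bounded} to a \emph{compact} orbit of the full diagonal Cartan subgroup is a deep statement of measure and orbit rigidity, equivalent in spirit to the Littlewood conjecture, and it is known unconditionally only in low dimensions, with the measure-classification results of the Einsiedler--Katok--Lindenstrauss circle supplying partial information. I do not expect the second-moment inequality~\eqref{eq:main} to circumvent this: that estimate controls an $L^2$-average over the Voronoi cell rather than the pointwise inhomogeneous minimum of a non-convex product form, so its role here remains purely motivational. The realistic deliverable is therefore a proof that is \emph{conditional} on the orbit-rigidity statement, together with an unconditional proof in those dimensions where the rigidity is already available; a fully self-contained alternative would be the classical Remak--Davenport reduction to critical lattices, which is explicit but grows forbiddingly intricate as $n$ increases.
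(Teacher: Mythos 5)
First, a point of orientation: the statement you were asked about is not proved in the paper, and cannot be --- it is the Minkowski Conjecture itself, stated there (explicitly labelled a conjecture) purely as motivation, and it remains open in general dimension. The only connection the paper draws is the conditional implication from~\cite{RD}: Theorem~\ref{thm:main} (which the paper does prove) together with the Strong Slicing Conjecture implies the Minkowski Conjecture. So your closing remark that the second-moment inequality~\eqref{eq:main} is ``purely motivational'' slightly undersells its role --- it is one of the two hypotheses of a genuine conditional implication --- but you are right that it does not by itself control the pointwise inhomogeneous minimum of the product form.

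As a proof attempt, your proposal has a genuine gap, which to your credit you identify yourself: the third step invokes, as a ``rigidity principle,'' the assertion that a bounded $A$-orbit closure in $X_n$ contains a compact $A$-orbit. This is an open conjecture (usually attributed to Margulis), not an available tool; the Einsiedler--Katok--Lindenstrauss measure-rigidity results do not yield it. Two further claims are inaccurate even granting that input. The compact orbits form an \emph{infinite} family (indexed by totally real fields of degree $n$ and their ideal classes), so verifying $\mu \leq 2^{-n}$ on them is not ``a finite, number-theoretic verification''; it is equivalent to an open problem on Euclidean minima of totally real fields, known only partially (e.g.\ work of Bayer-Fluckiger). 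And the cusp-confinement step for near-extremal lattices is not a routine compactness extraction; handling escape to the cusp is exactly where the known unconditional results require real work (McMullen's inductive treatment for $n \leq 6$, and the stable-lattice covering bounds of Shapira--Weiss~\cite{SW} beyond that). It is also worth knowing that McMullen's actual route \emph{avoids} your problematic step entirely: he shows by topological-dimension arguments that a bounded $A$-orbit closure contains a \emph{well-rounded} lattice, whose covering radius is at most $\sqrt{n}/2$, whence the bound $2^{-n}$ follows from the AM--GM inequality --- no classification of compact orbits is needed. In short, what you have written is an accurate roadmap through the literature, conditional at its central step; since the statement is an open conjecture, no complete proof was possible, and flagging the conditional step honestly was the right call.
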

The conjecture is commonly attributed to Minkowski, however, the author is not aware of any reference in Minkowski's work. The recent state of the 
conjecture is reflected in~\cite{SW}.

By means of the HLR Conjecture, the paper~\cite{RD} relates the Minkowski Conjecture to an another notable open problem, the Strong Slicing Conjecture~\cite[Section~2]{Mec}, stated below.

\begin{conj*}[Strong Slicing Conjecture (for symmetric bodies)]
Let $(e_1, e_2, \ldots, e_n)$ be an orthonormal basis in $\mathbb R^n$ with respect to the Euclidean scalar product $\langle \cdot, \cdot \rangle$.
Let $K \subset \mathbb R^n$ be a $\mathbf 0$-symmetric convex body of unit volume. Denote
\begin{align*}
  a_{ij}(K) = \int\limits_{K} \langle x, e_i \rangle \cdot \langle x, e_j \rangle \, dx. & \qquad \text{and} \\
  L_K =  \bigl( \det(a_{ij}(K))_{i, j = 1}^n \bigr)^{\frac{1}{2n}}.
\end{align*}
Then $L_K \leq \frac{1}{\sqrt{12}}$. (The equality is achieved for the unit cube and its affine images.)
\end{conj*}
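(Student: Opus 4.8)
The plan is to exploit the affine invariance of $L_K$, reduce to isotropic position, and then attack a sharp second-moment maximization by a variational argument for which the cube is the critical point. First I would record the invariance: if $x \mapsto Tx$ with $\det T = 1$, then a change of variables turns $(a_{ij})$ into $T(a_{ij})T^\top$, leaving $\det(a_{ij})$ unchanged, while the unit-volume constraint is preserved; hence $L_K$ is invariant under volume-preserving linear maps, consistent with the claimed equality for every affine image of the cube. I would then place $K$ in isotropic position, where $a_{ij} = L_K^2\,\delta_{ij}$ and therefore $\int_K \|x\|^2\,dx = \sum_i a_{ii} = n L_K^2$. Since $\det(a_{ij}) \le \bigl(\tfrac1n \operatorname{tr}(a_{ij})\bigr)^n$ by AM--GM, with equality exactly in isotropic position, one has $L_K^2 = \min_{\det T = 1} \tfrac1n \int_K \|Tx\|^2\,dx$. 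The conjecture then reads as the sharp bound $\int_K \|x\|^2\,dx \le n/12$ for every $\mathbf 0$-symmetric isotropic $K$ of unit volume, with $[-\tfrac12,\tfrac12]^n$ as the only extremizer modulo volume-preserving linear maps.

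Next I would set up the variational problem for $\Phi(K) = \det(a_{ij}(K)) = L_K^{2n}$ on $\mathbf 0$-symmetric unit-volume bodies, deforming $\partial K$ along volume-preserving normal perturbations. Because the cube is invariant under the full hyperoctahedral group, every admissible first variation averages to zero over this symmetry orbit, so the cube is a critical point of $\Phi$. The substantive local step is the second variation: parametrizing normal displacements of the facets of $[-\tfrac12,\tfrac12]^n$, imposing the volume constraint, and checking that the resulting quadratic form for $\log\det(a_{ij})$ is negative semidefinite with kernel exactly the directions tangent to the orbit of volume-preserving linear maps. Establishing this strict local maximality (modulo affine maps) is delicate but self-contained, and it is the portion I would expect to complete in full.

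The main obstacle, as I see it, is upgrading local to global maximality. I would try two routes. The first is a shadow-system argument in the spirit of Campi and Gronchi: analyze how $\det(a_{ij})$ varies along a shadow system $K_t$ (a linear movement of $K$ parallel to a fixed line), hoping for convexity in $t$ so that extremizers are forced to a rigid, ultimately unconditional and then cube-like shape while $L_K$ is controlled monotonically. The second is a compactness-plus-stability argument that rules out competing local maxima using the second-variation computation. Both routes collide with the same essential difficulty: $\Phi$ is not concave, and there is no known symmetrization that monotonically increases $L_K$ with the cube as its terminal state --- by contrast, rounding symmetrizations drive a body to the ball, which has the \emph{smaller} value $L_B^2 \to \tfrac1{2\pi e} < \tfrac1{12}$, so the cube sits as a maximizer rather than a symmetrization limit. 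This local-to-global gap is exactly the open core of the sharp (``strong'') slicing conjecture, which remains unresolved even after the recent proof of the slicing conjecture up to an absolute constant; so while the reduction and the local analysis above are within reach, I do not expect to be able to push this program to a complete proof.
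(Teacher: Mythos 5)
The statement you were asked to prove is not a theorem of this paper at all: it is the Strong Slicing Conjecture, quoted (following Meckes) purely for motivation, because an argument of Regev and Stephens-Davidowitz shows that it, together with the paper's actual main result (the HLR conjecture, Theorem~\ref{thm:main}), would imply Minkowski's conjecture. The paper contains no proof of it, and none exists in the literature --- it is a well-known open problem. So there is no proof to compare yours against, and no complete proof could have been written.

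With that understood, your proposal is mathematically sound as far as it goes, and your self-diagnosis of where it stops is accurate. The reductions in your first paragraph are correct: under a volume-preserving linear map the matrix $(a_{ij})$ transforms as $A \mapsto TAT^\top$, so $\det(a_{ij})$ and hence $L_K$ are invariant; by the AM--GM inequality for the eigenvalues of the positive definite matrix $A$ one gets $L_K^2 = \min_{\det T = 1} \frac{1}{n}\int_K \|Tx\|^2\,dx$, attained exactly in isotropic position; and the conjecture is then equivalent to the sharp bound $\int_K \|x\|^2\,dx \leq \frac{n}{12}$ for $\mathbf 0$-symmetric isotropic bodies of unit volume, consistent with the value $\frac{n}{12}$ computed for the cube $\left[-\frac{1}{2}, \frac{1}{2}\right]^n$. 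Your identification of the obstruction is also correct: even a full verification that the cube is a strict local maximizer of $\det(a_{ij})$ modulo volume-preserving linear maps would not close the problem, because no global mechanism is known --- there is no symmetrization that monotonically increases $L_K$ with the cube as its terminal state (rounding procedures converge to the ball, whose isotropic constant satisfies $L_B^2 \to \frac{1}{2\pi e} < \frac{1}{12}$), and $\det(a_{ij})$ lacks the convexity along shadow systems that such arguments require. That local-to-global gap is precisely the open content of the conjecture; the recent resolution of the slicing problem up to an absolute constant gives no information about the sharp constant or the extremizer. In short, your proposal contains no false step, but it is a program with an acknowledged essential gap rather than a proof --- which is the only honest outcome, since the paper itself merely states this conjecture and builds on it conditionally.
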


In its weaker version the Slicing Conjecture asserts that, if $K$ is a $\mathbf 0$-symmetric convex body of unit volume, then its isotropic
constant $L_K$ is bounded from above by a universal constant. In particular, the upper bound should be independent of the dimension. The notion of the isotropic constant is extremely important in convex geometry; its significance is justified by numerous applications (see, for 
instance,~\cite{AGM}).

An argument in~\cite[Section~6]{RD} shows that if the HLR Conjecture and the Strong Slicing Conjecture are true, then the Minkowski Conjecture is true as well.

We will also give an explicit answer when the inequality~\eqref{eq:main}
in the HLR Conjecture turns into an equality.

\section{Proof of Theorem 1}

Let $t \in \mathbb R^n$ be any vector satisfying
\begin{equation*}
\| t \| = R = \dist(t, \Lambda).
\end{equation*}
Consider the collection of polytopes
\begin{equation*}
\mathcal T(t) = \{ P + t + v : v \in \Lambda \}
\end{equation*}
Since $\mathcal T(t)$ is a tessellation of $\mathbb R^n$, we have
\begin{equation}\label{eq:1}
\int\limits_{P} \| x \|^2 dx = \sum\limits_{v \in \Lambda} \left( \int\limits_{P \cap (P + t + v)}  \| x \|^2 dx \right).
\end{equation}

Consider a single summand in the right-hand side of~\eqref{eq:1}. Denote
\begin{equation*}
Q(t, v) = P \cap (P + t + v).
\end{equation*}
The set $Q(t, v)$ has a center of symmetry at the point $\frac{t + v}{2}$, because the polytopes $P$ and $P + t + v$
are symmetric to each other with respect to the point $\frac{t + v}{2}$.  Consequently,
\begin{multline}\label{eq:2}
\int\limits_{Q(t, v)}  \| x \|^2 dx = 
\int\limits_{Q(t, v)} \left(
\left \| \frac{t + v}{2} \right\|^2 + 
\left \| x - \frac{t + v}{2} \right\|^2 +
2 \left\langle \frac{t + v}{2}, x - \frac{t + v}{2} \right\rangle
\right) \, dx
\\
= \left\| \frac{t + v}{2} \right\|^2 |Q(t, v)| + \int\limits_{Q(t, v)} \left\| x - \frac{t + v}{2} \right\|^2 dx.
\end{multline}
Indeed, by the symmetry of $Q(t, v)$, the term 
$\left\langle \frac{t + v}{2}, x - \frac{t + v}{2} \right\rangle$ vanishes
after integration.

Let us notice that
\begin{equation}\label{eq:3}
\left\| x - \frac{t + v}{2} \right\| \geq \dist \left( x, \frac{1}{2} \Lambda + \frac{t}{2} \right).
\end{equation}
Therefore, inserting~\eqref{eq:2} and~\eqref{eq:3} into~\eqref{eq:1}, we have
\begin{multline}\label{eq:4}
\int\limits_{P} \| x \|^2 dx =
\sum\limits_{v \in \Lambda} \left\| \frac{t + v}{2} \right\|^2 |Q(t, v)| + \sum\limits_{v \in \Lambda} \int\limits_{Q(t, v)} \left\| x - \frac{t + v}{2} \right\|^2 dx \geq \\
\sum\limits_{v \in \Lambda} \left\| \frac{t + v}{2} \right\|^2 |Q(t, v)| + \sum\limits_{v \in \Lambda} \int\limits_{Q(t, v)} \dist \left( x, \frac{1}{2} \Lambda + \frac{t}{2} \right)^2 dx \geq \\
|P| \cdot \inf\limits_{v \in \Lambda} \left\| \frac{t + v}{2} \right\|^2 + \int\limits_{P} \dist \left( x, \frac{1}{2} \Lambda + \frac{t}{2} \right)^2 dx.
\end{multline}

One can see that
\begin{equation}\label{eq:5}
\inf\limits_{v \in \Lambda} \left\| \frac{t + v}{2} \right\| = \frac{1}{2}\inf\limits_{v \in \Lambda} \| t - v \| = \frac{\dist(t, \Lambda)}{2} = \frac{\| t \|}{2}.
\end{equation}

Let $w_1, w_2, \ldots, w_{2^n} \in \frac{1}{2} \Lambda$ be a $2^n$-tuple of points, pairwise incomparable modulo $\Lambda$. Then the set
\begin{equation*}
D = \bigcup\limits_{i = 1}^{2^n} \left( \frac{1}{2} P + w_i + \frac{t}{2} \right)
\end{equation*}
is a fundamental domain for $\Lambda$. Since $P$ is also a fundamental domain for $\Lambda$ and since the function $f(x) = \dist \left( x, \frac{1}{2} \Lambda + \frac{t}{2} \right)^2$
is $\Lambda$-periodic, we have
\begin{multline}\label{eq:7}
\int\limits_{P} \dist \left( x, \frac{1}{2} \Lambda + \frac{t}{2} \right)^2 dx = \sum\limits_{i = 1}^{2^n}
\left( \int\limits_{\frac{1}{2} P + w_i + \frac{t}{2}} \dist \left( x, \frac{1}{2} \Lambda + \frac{t}{2} \right)^2 dx \right) = \\
2^n \int\limits_{\frac{1}{2} P} \| x \|^2 dx = \frac{1}{4} \int\limits_{P} \| x \|^2 dx.
\end{multline}

Finally, inserting~\eqref{eq:5} and~\eqref{eq:7} into~\eqref{eq:4}, we obtain
\begin{equation*}
\int\limits_{P} \| x \|^2 dx \geq |P| \frac{\| t \|^2}{4} + \frac{1}{4} \int\limits_{P} \| x \|^2 dx.
\end{equation*}
Hence, indeed,
\begin{equation*}
\int\limits_{P} \| x \|^2 dx \geq |P| \frac{\| t \|^2}{3}. \qedhere
\end{equation*}
\qed

\section{The case of equality}

It seems to be a natural question to determine all $\mathbf 0$-symmetric lattice Voronoi cells $P$ minimizing the quantity $\frac{1}{|P|}\int\limits_{P} \left( \frac{\| x \|}{R} \right)^2 dx$,
where $R = \sup\limits_{x \in P} \| x \|$ is the covering radius of the corresponding lattice. A careful inspection of the proof of Theorem 1 allows us determine the minimizers.

\begin{thm}\label{thm:equality}
The inequality~\eqref{eq:main} in Theorem 1 turns into equality if and only if $P$ is a rectangular box (i.e. a direct Minkowski sum of $n$ pairwise orthogonal segments).
\end{thm}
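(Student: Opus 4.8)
The plan is to dispose of the easy direction by a direct computation and then, for the converse, to track which inequalities in the proof of Theorem~1 are used and force each of them to be an equality.

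\emph{Sufficiency.} If $P = \prod_{i=1}^n [-a_i, a_i]$ is a rectangular box, then a coordinatewise computation gives $\int_P \| x \|^2\,dx = \frac13 |P| \sum_{i=1}^n a_i^2$, while the farthest vertex $(a_1, \ldots, a_n)$ shows that $R^2 = \sum_{i=1}^n a_i^2$; hence~\eqref{eq:main} holds with equality. Here the underlying lattice is the orthogonal lattice $\prod_i 2a_i \mathbb{Z} e_i$, whose Voronoi cell is exactly $P$.

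\emph{Necessity, reduction to two pointwise conditions.} Suppose equality holds in~\eqref{eq:main}. The derivation~\eqref{eq:4} is valid for \emph{every} deep hole $t$ (every $t$ with $\|t\| = R = \dist(t,\Lambda)$), so for each such $t$ both inequalities in~\eqref{eq:4} must be equalities. The inequality coming from $\sum_v |Q(t,v)| = |P|$ together with~\eqref{eq:5} forces
\begin{equation*}
\left\| \tfrac{t+v}{2} \right\| = \tfrac{R}{2} \quad \text{for every } v \in \Lambda \text{ with } |Q(t,v)| > 0,
\end{equation*}
that is, every point of the coset $t + \Lambda$ lying in $\inter(2P)$ has norm exactly $R$. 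The inequality coming from the pointwise bound~\eqref{eq:3} forces $\frac{t+v}{2}$ to be a nearest point of $\frac12\Lambda + \frac t2$ to almost every $x \in Q(t,v)$, i.e. $Q(t,v) \subseteq \frac{t+v}{2} + \frac12 P$. Since the cells $Q(t,v)$ tile $P$ and the half-scale Voronoi cells $\frac{t+v}{2} + \frac12 P$ tile $\mathbb{R}^n$, both families partition $P$ with total volume $|P|$, so the inclusion upgrades termwise to $Q(t,v) = P \cap \bigl( \frac{t+v}{2} + \frac12 P \bigr)$ for all $v$. Taking $v = \mathbf 0$ gives the clean corner condition $P \cap (P + t) = \frac12 (P + t)$, equivalently: $a + b \in P$ whenever $a, b \in P$ and $a - b = t$.

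\emph{Necessity, geometric synthesis.} It remains to deduce that $P$ is a box. The cell equalities say that the deep-hole translate of the Voronoi tiling subdivides $P$ into precisely the $2^n$ Voronoi cells of the refined lattice $\frac12\Lambda$, while the norm condition pins all the relevant coset vertices onto the sphere of radius $R$. I would extract an orthogonal product structure by induction on $n$: using the corner condition at a vertex deep hole together with the equal-norm condition, I would locate a Voronoi-relevant vector $u$ orthogonal to all remaining relevant vectors, split off the corresponding segment to write $P = [-\tfrac{\|u\|}{2},\tfrac{\|u\|}{2}]\tfrac{u}{\|u\|} \oplus P'$ with $P' \subset u^\perp$, and then verify that $P'$ is again the Voronoi cell of a lattice in $u^\perp$ for which the equality case holds, so the inductive hypothesis applies. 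The base case $n = 1$ is a segment. Concluding, $\Lambda$ is a rectangular lattice and $P = \prod_i [-a_i, a_i]$ is a box.

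\emph{Main obstacle.} I expect essentially all the difficulty to lie in the last step: converting the local corner condition and the sphere condition into a \emph{global} orthogonal decomposition, namely producing the orthogonal relevant vector to peel off. Equivalently, one must rule out every non-box parallelohedron — for example the regular hexagon in the plane, where at a deep hole $t$ the set $P \cap (P+t)$ is not a half-scaled copy of $P$ — by showing it violates either the corner condition or the equal-norm condition. I would attack this by analysing the tangent cone of $P$ at the vertex deep hole $t$ and the facets incident to it, using $P \cap (P+t) = \frac12(P+t)$ to force the incident facet normals to be mutually orthogonal.
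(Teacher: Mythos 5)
Your sufficiency computation and your extraction of the two equality conditions from the proof of Theorem~1 (the norm condition $\| t + v \| = R$ for every $v \in \Lambda \cap \inter(2P - t)$, and the inclusion $Q(t,v) \subseteq \frac{t+v}{2} + \frac12 P$) are correct and match the first half of the paper's argument. But the part you yourself flag as the ``main obstacle'' --- converting these conditions into the statement that $P$ is a box --- is exactly where all the difficulty lives, and your proposal for it is only a plan (``I would locate a Voronoi-relevant vector $u$ orthogonal to all remaining relevant vectors\dots''), not a proof. Nothing in the proposal actually produces that orthogonal vector, and it is far from clear that the peeling-off induction can be run without already knowing the answer; this is a genuine gap.

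The paper closes it with a different and short argument that needs no induction. First, a counting step you skip: the points $v_1, \dots, v_k$ of $\Lambda \cap \inter(2P - t)$ satisfy $k \geq 2^n$ (the sets $\frac12 P + \frac{t+v_i}{2}$ containing the $Q(t,v_i)$ each have volume $2^{-n}|P|$ and must cover $P$) and $k \leq 2^n$ (the $v_i$ are pairwise incongruent modulo $2\Lambda$), so $k = 2^n$ and $Q(t,v_i) = \frac12 P + \frac{t+v_i}{2}$ exactly. (This also repairs a small slip in your write-up: your termwise identity only yields $P \cap (P+t) = P \cap \bigl( \frac t2 + \frac12 P \bigr)$, and the containment $\frac t2 + \frac12 P \subseteq P$ needed for your ``clean corner condition'' comes precisely from this counting step.) Then, by the norm condition, all $2^n$ points $v_i$ lie on the sphere of radius $\|t\| = R$ centred at $-t$, and this sphere is empty since every $v \in \Lambda$ has $\|t + v\| \geq R$; hence $\conv\{v_1, \dots, v_{2^n}\}$ is a lattice Delaunay cell with at least $2^n$ vertices. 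A Delaunay cell spans no obtuse triangles (Proposition~\ref{prop:dela}), and by the Danzer--Gr\"unbaum theorem (Proposition~\ref{prop:dagr}) a set of $2^n$ points in $\mathbb R^n$ with all angles non-obtuse is the vertex set of a rectangular box; therefore the Delaunay cell, and with it $P$, is a box. This appeal to Delaunay cells and to Danzer--Gr\"unbaum is the key idea missing from your proposal.
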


Before we proceed with a proof, let us recall the notion of a lattice Delaunay cell. Given a lattice $\Lambda \subset \mathbb R^n$ and a Euclidean ball $B \subset \mathbb R^n$, we call the sphere $\partial B$
empty if $\inter B \cap \Lambda = \varnothing$. If $\partial B$ is
an empty sphere and $\partial B \cap \Lambda \neq \varnothing$, then
the convex polytope $\conv (\partial B \cap \Lambda)$ is called a {\it lattice Delaunay cell}.

We will need the following two propositions.

\begin{prop}\label{prop:dela}
Let $v_1, v_2, v_3$ be three vertices of a lattice Delaunay cell. Then
\begin{equation*}
\langle v_1 - v_3, v_2 - v_3 \rangle \geq 0.
\end{equation*}
In other words, a lattice Delaunay cell does not span obtuse-angled triangles.
\end{prop}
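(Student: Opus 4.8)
The plan is to argue by contradiction, exploiting the fact that an integer combination of lattice points is again a lattice point. Let $B$ be a ball with center $c$ and radius $\rho$ whose boundary sphere $\partial B$ is empty and passes through the vertices of the Delaunay cell, so that $\| v_i - c \| = \rho$ for $i = 1, 2, 3$. The key move is to introduce the auxiliary point
\begin{equation*}
w = v_1 + v_2 - v_3,
\end{equation*}
which lies in $\Lambda$ because $\Lambda$ is a group. Geometrically $w$ is the reflection of $v_3$ in the midpoint of the segment $v_1 v_2$, so that $v_1, w, v_2, v_3$ are the vertices of a parallelogram.

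Next I would compute the distance from $w$ to the center $c$. Writing $a = v_1 - c$, $b = v_2 - c$ and $d = v_3 - c$, each of length $\rho$, we have $w - c = a + b - d$, and expanding the square gives
\begin{equation*}
\| w - c \|^2 = 3\rho^2 + 2\langle a, b\rangle - 2\langle a, d\rangle - 2\langle b, d\rangle.
\end{equation*}
On the other hand, a short expansion shows
\begin{equation*}
\langle v_1 - v_3, v_2 - v_3 \rangle = \langle a - d, b - d \rangle = \langle a, b\rangle - \langle a, d\rangle - \langle b, d\rangle + \rho^2 .
\end{equation*}
Eliminating the mixed inner products between the two displays produces the clean identity
\begin{equation*}
\| w - c \|^2 = \rho^2 + 2\langle v_1 - v_3, v_2 - v_3 \rangle .
\end{equation*}

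Finally I would read off the conclusion. Suppose, for contradiction, that the triangle spanned by $v_1, v_2, v_3$ had an obtuse angle at $v_3$, i.e. $\langle v_1 - v_3, v_2 - v_3 \rangle < 0$. Then the identity above forces $\| w - c \|^2 < \rho^2$, so the lattice point $w$ lies in the open ball $\inter B$. This contradicts the emptiness of $\partial B$, namely $\inter B \cap \Lambda = \varnothing$. Hence $\langle v_1 - v_3, v_2 - v_3 \rangle \geq 0$, which is exactly the assertion; the other two angles of the triangle are covered by relabelling the vertices.

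I expect essentially no genuine obstacle here: the only computation is the elementary expansion leading to the displayed identity, and everything else is a one-line emptiness argument. All of the conceptual content sits in choosing the right auxiliary lattice point $w = v_1 + v_2 - v_3$, after which the proof is forced.
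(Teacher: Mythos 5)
Your proof is correct: the identity $\| w - c \|^2 = \rho^2 + 2\langle v_1 - v_3, v_2 - v_3 \rangle$ for the lattice point $w = v_1 + v_2 - v_3$ checks out, and a strictly negative inner product would place $w$ in $\inter B$, contradicting emptiness of the sphere. The paper itself gives no argument here, only a citation to Deza--Laurent (Proposition 13.2.8), and your parallelogram construction is precisely the standard proof found there, so you have in effect supplied the omitted details rather than found a different route.
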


\begin{proof}
See the proof of~\cite[Proposition 13.2.8]{DeLa}.
\end{proof}

\begin{prop}\label{prop:dagr}
Let $\{ v_1, v_2, \ldots, v_{2^n} \} \subset \mathbb R^n$ be a set of $2^n$
pairwise distinct points such that the inequality
\begin{equation*}
\langle v_i - v_l, v_j - v_l \rangle \geq 0
\end{equation*}
holds for every $i, j, l \in \{ 1, 2, \ldots, 2^n \}$. Then 
$\{ v_1, v_2, \ldots, v_{2^n} \}$ is the vertex set of some 
$n$-dimensional rectangular box.
\end{prop}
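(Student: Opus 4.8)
The plan is to prove Proposition~\ref{prop:dagr} by induction on $n$. The statement says that $2^n$ pairwise distinct points with all pairwise angles (viewed from any third point) non-obtuse must be the vertex set of a rectangular box. The key structural observation is that the non-obtuse-angle condition is extremely rigid: if we fix one point, say $v_{2^n}$, as an apex and look at the cone of directions $v_i - v_{2^n}$, then any two of these directions make a non-acute inner product being nonnegative means the angle between consecutive edges at a vertex is at most $\pi/2$. For $2^n$ points in $\mathbb{R}^n$ this forces a product structure.

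\begin{proof}[Proof sketch of Proposition~\ref{prop:dagr}]
The base case $n = 1$ is immediate: two distinct points on a line form a segment, which is a $1$-dimensional box. For the inductive step, assume the statement for dimension $n - 1$. First I would extract geometric consequences of the hypothesis. Pick any point, relabel it $v_{2^n}$, and translate so that $v_{2^n} = \mathbf 0$; write $u_i = v_i - v_{2^n}$ for the remaining points. The hypothesis with $l = 2^n$ gives $\langle u_i, u_j \rangle \geq 0$ for all $i, j$, so all the $u_i$ lie in a common "non-obtuse" configuration emanating from the origin. The crucial step is to show that among the $u_i$ there is a \emph{diameter}: a point $p$ maximizing $\|u_i\|$ such that $p = 2c$ for the center $c$ of the box, and that reflection in $c$ preserves the point set. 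Concretely, I expect to prove that the point set is centrally symmetric about the centroid, using the angle conditions to force, for each $u_i$, the existence of its "antipode."
\end{proof}

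The main obstacle is establishing the product (box) structure rather than merely an affine-image-of-a-box structure, and controlling the count exactly $2^n$. The angle conditions alone guarantee a configuration of "acute type," but one must leverage that there are precisely $2^n$ points in $\mathbb{R}^n$ to force orthogonality of the edges and the splitting into a Minkowski sum. The way I would handle this is to find an edge $e = v_a - v_b$ of minimal length (or a supporting direction), project onto its orthogonal complement $e^\perp \cong \mathbb{R}^{n-1}$, and argue that the $2^n$ points split into two parallel classes of $2^{n-1}$ points each, related by translation along $e$. The non-obtuseness then forces these translates to have equal projections, and each projected class, still satisfying the hypothesis in dimension $n - 1$, is a box by induction; orthogonality of $e$ to that box completes the Minkowski-sum decomposition. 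Pinning down that the split is into two equal halves of size $2^{n-1}$, and that no point lies strictly between the two translate-layers, is the delicate combinatorial-geometric part I expect to require the most care.
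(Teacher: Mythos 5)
The paper does not actually prove this proposition: it cites Danzer and Gr\"unbaum \cite{DaGr}, whose argument runs through the packing of the half-size homothets $\frac{1}{2}\bigl(\conv\{v_i\} + v_j\bigr)$ inside $\conv\{v_i\}$ and an analysis of the equality case. So your sketch is necessarily a different route, but as it stands it has genuine gaps at exactly the points where the work has to happen. First, the claim that the configuration is centrally symmetric about its centroid (``for each $u_i$ there exists its antipode'') is asserted, not derived; the non-obtuseness conditions do not obviously produce an antipode, and you give no mechanism for extracting one. Second, the two-layer splitting fails as described. If $e = v_b - v_a$ is a shortest edge, the conditions $\langle v_b - v_a, v_l - v_a\rangle \geq 0$ and $\langle v_a - v_b, v_l - v_b\rangle \geq 0$ do confine every $v_l$ to the slab $0 \leq \langle e, v_l - v_a\rangle \leq \|e\|^2$, but the remaining condition $\langle v_a - v_l, v_b - v_l\rangle \geq 0$ only says that $v_l$ avoids the open ball with diameter $[v_a, v_b]$: writing $v_l = v_a + f e + w$ with $w \perp e$, this is $\|w\|^2 \geq f(1-f)\|e\|^2$, which permits any intermediate value $f \in (0,1)$ provided $\|w\|$ is large enough. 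Nothing in the sketch excludes such intermediate layers; this is precisely the step you flag as ``delicate'' and then leave open.

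Third, even granting two layers, the count of $2^{n-1}$ points per layer does not follow from the inductive hypothesis as you state it. Your induction hypothesis only concerns sets of \emph{exactly} $2^{n-1}$ points in $\mathbb R^{n-1}$; to conclude that each layer has exactly $2^{n-1}$ points you need the \emph{upper bound} of Danzer--Gr\"unbaum (a non-obtuse set in $\mathbb R^{n-1}$ has at most $2^{n-1}$ points), which is a strictly stronger statement that your induction never establishes. If you want to pursue an inductive proof, you would have to strengthen the induction to carry both the cardinality bound and the equality characterization simultaneously, and you would need a genuine argument (not just the slab containment) forcing the values of $\langle e, \cdot\rangle$ on the point set to be two-valued. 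Absent that, the honest course here is to do what the paper does and invoke \cite{DaGr} directly.
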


\begin{proof}
See~\cite[Satz II.b.\textbeta]{DaGr}.
\end{proof}

Now we are ready to prove Theorem~\ref{thm:equality}.

\begin{proof}[Proof of Theorem~\ref{thm:equality}]
The ``if'' part is straightforward. Indeed, if 
\begin{equation*}
P = [-a_1, a_1] \times [-a_2, a_2] \times \ldots \times [-a_n, a_n],
\end{equation*}
then $R^2 = a_1^2 + a_2^2 + \ldots + a_n^2$, while
\begin{equation*}
\frac{1}{|P|} \int\limits_{P} \| x \|^2 dx = \sum\limits_{i = 1}^n 
\left(\frac{1}{2a_i} \int\limits_{-a_i}^{a_i} t^2 \, dt \right) = 
\frac{1}{3}(a_1^2 + a_2^2 + \ldots + a_n^2).
\end{equation*}

We proceed with the ``only if'' part. Assume $P$ is the Voronoi cell of $\mathbf 0$ with respect to
the lattice $\Lambda \subset \mathbb R^n$ such that the inequality~\eqref{eq:main} turns into equality.

We notice that~\eqref{eq:3} turns into equality exactly in one of the two cases:
\begin{eqnarray}
  \frac{t + v}{2} \notin \inter P \quad (\Longleftrightarrow |Q(t, v)| = 0), \label{eq:8} \\
  Q(t, v) \subseteq \frac{1}{2} P + \frac{t + v}{2}. \label{eq:9}
\end{eqnarray}

Let
\begin{equation*}
  \{ v_1, v_2, \ldots, v_k \} = \Lambda \cap \inter(2P - t).
\end{equation*}
Equivalently, $v_1, v_2, \ldots, v_k$ are exactly those points of 
$\Lambda$ for which~\eqref{eq:8} fails.

Assume that~\eqref{eq:9} fails with $v = v_i$ for some $i \in \{ 1, 2, \ldots, k \}$. Then the inequality~\eqref{eq:3} is strict. Thus the inequality~\eqref{eq:main} is strict, too, which contradicts our assumption on $P$. Hence~\eqref{eq:9} holds with $v = v_i$ for each
$i \in \{ 1, 2, \ldots, k \}$.

It is clear that
\begin{equation*}
  P = \bigcup\limits_{i = 1}^k Q(t, v_i).
\end{equation*}
Then
\begin{equation*}
  |P| = \sum\limits_{i = 1}^k |Q(t, v_i)| \leq \sum\limits_{i = 1}^k \left| \frac{1}{2} P + \frac{t + v_i}{2} \right| = \frac{k}{2^n} |P|.
\end{equation*}
Consequently, $k \geq 2^n$.

On the other hand, for every $1 \leq i < j \leq k$ we have 
$v_i, v_j \in \inter (2P - t)$. But $2P$ is a fundamental domain of the
lattice $2\Lambda$. Therefore $v_i \not \equiv v_j \pmod{2 \Lambda}$.
Since $| \Lambda / 2\Lambda | = 2^n$, we conclude that $k \leq 2^n$.

The above implies $k = 2^n$ and $Q(t, v_i) = \frac{1}{2} P + \frac{t + v_i}{2}$ for $i \in \{ 1, 2, \ldots, 2^n \}$.

Consider the positive homothety $H_i$ (with coefficient $+ \frac{1}{2}$) that sends $P$ to $\frac{1}{2} P + \frac{t + v_i}{2}$.
Since the center of $H_i$ is the point $t + v_i$ and since
\begin{equation*}
  \frac{1}{2} P + \frac{t + v_i}{2} = Q(t, v_i) \subset P,
\end{equation*}
one concludes that $t + v_i \in P$. Thus $\| t + v_i \| \leq \| t \|$. On the other hand, $\| t + v_i \| \geq \| t \|$ by definition of $t$.
Hence
\begin{equation}\label{eq:10}
  \| t + v_i \| = \| t \|
\end{equation}

Consider the sphere $S$ of radius $\| t \|$ centered at $-t$. By~\eqref{eq:10},
\begin{equation*}
  v_i \in S \quad \text{for every $i \in \{ 1, 2, \ldots, 2^n \}$.}
\end{equation*}
On the other hand, each $v \in \Lambda$ satisfies $\| t + v \| \geq \| t \|$, so $S$ is an empty sphere, and
\begin{equation*}
\Pi = \conv (S \cap \Lambda)
\end{equation*}
is a Delaunay cell with at least $2^n$ vertices. From 
Propositions~\ref{prop:dela} and~\ref{prop:dagr} one concludes that 
$\Pi$ is a $d$-dimensional rectangular box,
hence so is $P$.
\end{proof}

\section*{Acknowledgements}

The author is indebted to Barak Weiss and Mathueu Dutour Sikiri\'c for fruitful discussions on the subject. The author is also indebted to an
anonymous referee for useful suggestions regarding the text.

\end{document}